\pgfplotsset{width=7cm,compat=1.10}
\pgfplotsset{
  /pgfplots/colormap={pink}{%
    color(0cm) = (blue);
    color(1cm) = (cyan!50!blue);
    color(2cm) = (cyan!50);
    color(3cm) = (cyan) }
}
\title{О равномерных диофантовых экспонентах решёток
       \thanks{Работа поддержана Фондом развития теоретической физики и математики <<БАЗИС>>}}
\author{О.\,Н.\,Герман}
\date{}
\theoremstyle{definition}
\newtheorem{definition}{Определение}
\newtheorem*{notation*}{Обозначение}
\theoremstyle{remark}
\newtheorem*{remark*}{Замечание}
\theoremstyle{plain}
\newtheorem{theorem}{Теорема}
\newtheorem*{theorem*}{Теорема}
\newtheorem{lemma}{Лемма}
\newtheorem*{lemma*}{Лемма}
\newtheorem*{corollary*}{Следствие}
\DeclareMathOperator{\conv}{conv}
\renewcommand{\phi}{\varphi}
\renewcommand{\vec}[1]{\mathbf{#1}}
\renewcommand{\geq}{\geqslant}
\renewcommand{\leq}{\leqslant}
\newcommand{\e}{\varepsilon}
\newcommand{\R}{\mathbb{R}}
\newcommand{\Z}{\mathbb{Z}}
\newcommand{\Q}{\mathbb{Q}}
\newcommand{\N}{\mathbb{N}}
\newcommand{\La}{\Lambda}
\newcommand{\cB}{\mathcal{B}}
\newcommand{\cH}{\mathcal{H}}
\newcommand{\cK}{\mathcal{K}}
\newcommand{\cP}{\mathcal{P}}
\newcommand{\cQ}{\mathcal{Q}}
\begin{document}

\maketitle

\begin{abstract}
  В данной работе мы изучаем спектр значений слабых равномерных диофантовых экспонент решёток и получаем полное его описание в двумерном случае.
\end{abstract}

\section{Введение}

\subsection{Диофантовы экспоненты векторов}

В 1842 году в работе \cite{dirichlet} Дирихле опубликовал свою знаменитую теорему, положившую начало теории диофантовых приближений. Существует два классических способа её формулировки, которые приводят к понятиям \emph{регулярной} и \emph{равномерной} диофантовых экспонент. Например, в задаче о совместных приближениях для набора $\Theta=(\theta_1,\ldots,\theta_n)\in\R^n$ равномерный вариант теоремы Дирихле говорит, что система 
\begin{equation} \label{eq:dirichlet_simultaneous}
\begin{cases}
  0<|x|\leq t \\
  \max_{1\leq i\leq n}|\theta_ix-y_i|\leq t^{-\gamma}
\end{cases}
\end{equation}
имеет ненулевое целочисленное решение (относительно $x,y_1,\ldots,y_n$) при $\gamma=1/n$ и произвольном $t\geq1$.

\begin{definition} \label{def:belpha_simultaneous}
  Супремум вещественных чисел $\gamma$, для которых существует сколь угодно большое $t$, такое что (соотв. для которых при любом достаточно большом $t$) система неравенств \eqref{eq:dirichlet_simultaneous} имеет ненулевое решение $(x,y_1,\ldots,y_n)\in\Z^{n+1}$, называется \emph{регулярной} (соотв. \emph{равномерной}) \emph{диофантовой экспонентой} вектора $\Theta$ и обозначается $\omega(\Theta)$ (соотв. $\hat\omega(\Theta)$).
\end{definition}

Из теоремы Дирихле мгновенно следуют <<тривиальные>> оценки
\[
  \omega(\Theta)\geq\hat\omega(\Theta)\geq 1/n.
\]
Справедливо также неравенство $\hat\omega(\Theta)\leq1$ для каждого вектора $\Theta$, имеющего хотя бы одну иррациональную компоненту. Причина --- в том же двумерном эффекте, из-за которого в случае $n=1$ равномерная диофантова экспонента становится тривиальной. Подробности можно найти, например, в обзоре \cite{german_UMN_2023}.

Параметрическая геометрия чисел (см. \cite{schmidt_summerer_2009, schmidt_summerer_2013, roy_annals_2015, roy_zeitschrift_2016}) позволяет доказать, что спектр возможных значений каждой из экспонент $\omega(\Theta)$ и $\hat\omega(\Theta)$ описывается упомянутыми выше неравенствами. Совсем не так обстоит дело с диофантовыми экспонентами решёток.

\subsection{Диофантовы экспоненты решёток}

Для каждого $\vec z=(z_1,\ldots,z_d)\in\R^d$ положим
\[
  |\vec z|=\max_{1\leq i\leq d}|z_i|,
  \qquad
  \Pi(\vec z)=\prod_{\begin{subarray}{c}1\leq i\leq d\end{subarray}}|z_i|^{1/d}.
\]
Определим также для каждого $d$-набора $\pmb\lambda=(\lambda_1,\ldots,\lambda_d)\in\R_+^d$ параллелепипед 
\begin{equation}\label{eq:prallelepipeds_lattice_exp}
  \cP(\pmb\lambda)=\Big\{\,\vec z=(z_1,\ldots,z_d)\in\R^d \ \Big|\ |z_i|\leq\lambda_i,\ i=1,\ldots,d \Big\}.
\end{equation}

\begin{definition} \label{def:regular_lattice_exponents}
  Пусть $\La$ --- решётка в $\R^d$ полного ранга. Супремум вещественных чисел $\gamma$, для которых существует сколь угодно большое $t$ и $d$-набор $\pmb\lambda\in\R_+^d$, удовлетворяющий неравенствам
  \[
    |\pmb\lambda|\leq t,
    \qquad
    \Pi(\pmb\lambda)\leq t^{-\gamma},
  \]
  такой что параллелепипед $\cP(\pmb\lambda)$ содержит ненулевые точки решётки $\La$, называется \emph{(регулярной) диофантовой экспонентой} решётки $\La$ и обозначается $\omega(\La)$.
\end{definition}

В отличие от задачи о совместных приближениях, равномерный аналог диофантовой экспоненты решётки можно определять по-разному, ибо в определении \ref{def:regular_lattice_exponents} квантор существования стоит как при $t$, так и при $\pmb\lambda$.

\begin{definition} \label{def:strong_uniform_lattice_exponents}
  Пусть $\La$ --- решётка в $\R^d$ полного ранга. Супремум вещественных чисел $\gamma$, таких что для любого достаточно большого $t$ и любого $d$-набора $\pmb\lambda\in\R_+^d$, удовлетворяющего равенствам
  \[
    |\pmb\lambda|=t,
    \qquad
    \Pi(\pmb\lambda)=t^{-\gamma},
  \]
  параллелепипед $\cP(\pmb\lambda)$ содержит ненулевые точки решётки $\La$, называется \emph{сильной равномерной диофантовой экспонентой} решётки $\La$ и обозначается $\hat\omega(\La)$.
\end{definition}

\begin{definition} \label{def:weak_uniform_lattice_exponents}
  Пусть $\La$ --- решётка в $\R^d$ полного ранга. Супремум вещественных чисел $\gamma$, таких что для любого достаточно большого $t$ существует $d$-набор $\pmb\lambda\in\R_+^d$, удовлетворяющий неравенствам
  \[
    |\pmb\lambda|\leq t,
    \qquad
    \Pi(\pmb\lambda)\leq t^{-\gamma},
  \]
  такой что параллелепипед $\cP(\pmb\lambda)$ содержит ненулевые точки решётки $\La$, называется \emph{слабой равномерной диофантовой экспонентой} решётки $\La$ и обозначается $\hat{\hat\omega}(\La)$.
\end{definition}

Из теоремы Минковского о выпуклом теле и определений диофантовых экспонент решёток легко следуют неравенства
\[
  \omega(\La)\geq
  \hat{\hat\omega}(\La)\geq
  \hat\omega(\La)\geq0.
\]
В работе \cite{german_comm_math_2023} показано, что сильные равномерные диофантовы экспоненты решёток принимают лишь тривиальные значения, то есть $0$ и $\infty$. Цель же настоящей работы --- показать, что спектр значений слабой равномерной диофантовой экспоненты решёток нетривиален.

Статья организована следующим образом. В параграфе \ref{sec:spectra} мы обсуждаем, что известно про спектры диофантовых экспонент решёток и формулируем основной результат статьи. В параграфе \ref{sec:continued_fractions_and_lattices} мы обсуждаем связь между диофантовыми экспонентами вещественных чисел и диофантовыми экспонентами решёток в размерности $2$. В параграфе \ref{sec:relative_and_hyperbolic_minima} мы напоминаем определение относительных минимумов решётки и вводим понятие гиперболического минимума решётки. Наконец, в параграфе \ref{sec:the_proof} мы приводим доказательство основного результата данной статьи.

\section{Спектры диофантовых экспонент решёток}\label{sec:spectra}

В работе \cite{german_lattice_exponents_spectrum} показано, что при $d=2$ спектр значений экспоненты $\omega(\La)$ совпадает с лучом $[0,+\infty]$ и что при $d\geq3$ он содержит отрезок
\[
  \bigg[3-\frac{d}{(d-1)^2}\,,\,+\infty\bigg].
\]
Конечно же, $0$ также принадлежит спектру при любом $d$, ибо $\omega(\La)=0$ всякий раз, когда функция $\Pi(\vec z)$ отделена от нуля в ненулевых точках решётки $\La$. К примеру, это так для любой решётки полного модуля вполне вещественного алгебраического расширения поля $\Q$ (см. \cite{borevich_shafarevich}). Теорема Шмидта о подпространствах позволяет строить решётки с регулярными диофантовыми экспонентами, принимающими значения
\begin{equation*}
  \frac{\,ab\,}{cd}\,,\qquad
  \begin{array}{l}
    a,b,c\in\N, \\
    a+b+c=d.
  \end{array}
\end{equation*}
Описание соответствующих решёток можно найти в работе \cite{german_lattice_transference}. Естественно ожидать, что при любом $d$ спектр значений экспоненты $\omega(\La)$ совпадает с лучом $[0,+\infty]$. Однако же при $d\geq3$ это до сих пор не доказано.

Как было сказано выше, спектр значений сильных равномерных диофантовых экспонент решёток тривиален. А именно, в работе \cite{german_comm_math_2023} показано, что если решётка $\La$ подобна подрешётке $\Z^d$ по модулю действия группы невырожденных диагональных операторов, то $\hat\omega(\La)=\infty$, а если нет, то $\hat\omega(\La)=0$.

Данная работа посвящена описанию спектра значений слабой равномерной диофантовой экспоненты решёток в случае $d=2$. Следующее утверждение является основным результатом статьи.

\begin{theorem}\label{t:spectrum_of_the_weak}
  При $d=2$ спектр значений $\hat{\hat\omega}(\Lambda)$ совпадает с множеством $[0,\infty]$.
\end{theorem}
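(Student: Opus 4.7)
Крайние значения спектра реализуются непосредственно. Значение $\hat{\hat\omega}(\Lambda) = \infty$ имеет место для любой решётки $\Lambda$, содержащей ненулевой вектор $(v_1, 0)$ на координатной оси: при любом $\gamma$ и достаточно большом $t$ пара $\pmb\lambda = (|v_1|, t^{-2\gamma}/|v_1|)$ удовлетворяет неравенствам $|\pmb\lambda| \le t$ и $\Pi(\pmb\lambda) = t^{-\gamma}$, а параллелепипед $\cP(\pmb\lambda)$ содержит этот вектор. С другой стороны, $\hat{\hat\omega}(\Lambda) = 0$ достигается на решётках, для которых функция $\Pi(\vec z)$ отделена от нуля на ненулевых точках $\Lambda$ --- например, на геометрическом вложении порядка вполне вещественного квадратичного расширения $\Q$ (см. \cite{borevich_shafarevich}).

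Основная часть --- реализация каждого $\gamma \in (0, \infty)$. Первый шаг: замечая, что для ненулевого $\vec v = (v_1, v_2) \in \Lambda$ оптимальным является выбор $\lambda_i = |v_i|$ (с тривиальной модификацией, если $v_i = 0$), условие $\hat{\hat\omega}(\Lambda) \ge \gamma$ становится равносильным существованию, для каждого достаточно большого $t$, ненулевого $\vec v \in \Lambda$ с $|\vec v| \le t$ и $|v_1 v_2| \le t^{-2\gamma}$. Среди таких векторов минимум $|v_1 v_2|$ достигается на относительном минимуме $\Lambda$, а в двумерном случае последовательность относительных минимумов напрямую связана с цепной дробью, ассоциированной с решёткой (параграф \ref{sec:continued_fractions_and_lattices}).

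Второй шаг --- описание $\hat{\hat\omega}(\Lambda)$ через неполные частные $\{a_k\}$ и знаменатели $\{q_k\}$ подходящих дробей. Опираясь на понятие гиперболического минимума из параграфа \ref{sec:relative_and_hyperbolic_minima}, ожидается получить явное выражение $\hat{\hat\omega}(\Lambda)$ в виде нижнего предела функции от $\{a_k\}$, $\{q_k\}$. С такой характеризацией в руках, для каждого целевого $\gamma \in (0, \infty)$ строится последовательность $\{a_k\}$, её реализующая: для малых $\gamma$ --- равномерно растущая с подходящей скоростью (типа $a_{k+1} \asymp q_k^{\beta(\gamma)}$); для больших $\gamma$ --- разреженно-скачкообразная, с длинными блоками ограниченных неполных частных, разделёнными аккуратно подобранными <<сверхбольшими>> $a_{k_j}$ вдоль специальной подпоследовательности индексов $\{k_j\}$, обеспечивающими требуемое поведение во всех масштабах $t$.

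Главной трудностью ожидаю не само построение, а верификацию \emph{точного} равенства $\hat{\hat\omega}(\Lambda) = \gamma$. Неравенство $\hat{\hat\omega}(\Lambda) \ge \gamma$ следует по построению, тогда как противоположное $\hat{\hat\omega}(\Lambda) \le \gamma$ требует предъявления последовательности масштабов $t_n \to \infty$, на которых никакой вектор решётки не даёт лучшего показателя. Поскольку определение \ref{def:weak_uniform_lattice_exponents} допускает адаптивный по $t$ выбор $\pmb\lambda$, нужно исключить любые <<нестандартные>> выборы; именно здесь оказывается существенной исчерпывающая характеризация гиперболических минимумов из параграфа \ref{sec:relative_and_hyperbolic_minima}, ограничивающая множество <<кандидатов>> на роль оптимального вектора и позволяющая замкнуть аргумент.
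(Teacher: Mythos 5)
Крайние значения $0$ и $\infty$ вы реализуете корректно и по существу так же, как в статье, и верно сводите задачу к последовательным гиперболическим минимумам (ср.~\eqref{eq:hat_hat_omega_in_terms_of_hyperbolic_minima}). Однако в основной части --- реализации произвольного $\gamma\in(0,\infty)$ --- есть принципиальный пробел: вы предлагаете управлять \emph{одной} цепной дробью $[a_0;a_1,a_2,\ldots]$, <<ассоциированной с решёткой>>, тогда как такой подход упирается в потолок $\gamma=1/2$. Действительно, если гиперболические минимумы отвечают подходящим дробям одного числа $\theta$, то $\Pi(\vec z_k)^2\asymp q_k|q_k\theta-p_k|\asymp a_{k+1}^{-1}$, а $|\vec z_{k+1}|\asymp q_{k+1}\geq a_{k+1}$, поэтому
\[
  \frac{-\ln\Pi(\vec z_k)}{\ln|\vec z_{k+1}|}\leq
  \frac{\tfrac12\ln a_{k+1}+O(1)}{\ln q_{k+1}}\leq\frac12+o(1),
\]
и никакая <<разреженно-скачкообразная>> расстановка неполных частных этого не исправит: при $t$ чуть меньшем $q_{k+1}$ лучший доступный вектор решётки всё равно даёт лишь $\Pi\asymp a_{k+1}^{-1/2}\geq q_{k+1}^{-1/2}$. Именно поэтому в статье специально отмечено, что решёток вида \eqref{eq:La_theta} для описания спектра $\hat{\hat\omega}(\La)$ недостаточно.

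Ключевая идея, которой не хватает вашему плану, --- взять \emph{два различных} числа $\theta$ и $\eta$, неполные частные которых определяются взаимно, друг через друга (формулы \eqref{eq:next_partial_qoutients_definition}), так что знаменатели их подходящих дробей перемежаются по закону $q_k\asymp s_{k-1}^\beta$, $s_k\asymp q_k^\beta$. Тогда гиперболические минимумы чередуются между двумя семействами $\vec v_k$, $\vec w_k$, для каждой пары соседних минимумов выполнено $\ln|\vec z_{k+1}|\approx\beta\ln|\vec z_k|$ и $\Pi(\vec z_k)\asymp|\vec z_{k+1}|^{(\beta^{-1}-\beta)/2}$, что даёт $\hat{\hat\omega}(\La)=(\beta-\beta^{-1})/2$, а эта величина пробегает весь луч $(0,\infty)$ при $\beta\in(1,\infty)$. Отмечу также, что даже при правильной конструкции ваш текст остаётся планом: центральный технический шаг --- проверка того, что точки $\vec v_k$, $\vec w_k$ с некоторого номера действительно являются гиперболическими минимумами (в статье это делается покрытием области $\cH(\vec w_m)$ множествами $\cH(\vec v_m)$, $\cP_2$, $\cP_3$ и подсчётом точек решётки в соответствующих параллелограммах) --- у вас никак не намечен, хотя именно он обеспечивает верхнюю оценку $\hat{\hat\omega}(\La)\leq\gamma$, которую вы сами называете главной трудностью.
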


\section{Цепные дроби и решётки в $\R^2$}\label{sec:continued_fractions_and_lattices}

\subsection{Диофантова экспонента вещественного числа}

При $n=1$ диофантовы экспоненты векторов становятся диофантовыми экспонентами вещественных чисел. Для каждого $\theta\in\R$ в соответствии с определением \ref{def:belpha_simultaneous} экспонента $\omega(\theta)$ равна супремуму таких вещественных $\gamma$, что неравенство
\[
  |\theta x-y|\leqslant|x|^{-\gamma}
\]
имеет бесконечно много решений в ненулевых целых $x$, $y$.

Благодаря классическому соотношению
\begin{equation}\label{eq:omega_of_a_number_vs_its_continued_fraction}
  \omega(\theta)=1+\limsup_{n\to+\infty}\frac{\ln a_{n+1}}{\ln q_n},
\end{equation}
связывающему $\omega(\theta)$ с цепной дробью числа $\theta$,
\[
  \theta=[a_0;a_1,a_2,\ldots],\quad
  \frac{p_n}{q_n}=[a_0;a_1,\ldots,a_n],
\]
можно явно строить числа с любой заданной наперёд диофантовой экспонентой $\omega(\theta)\geq1$.

\subsection{Диофантовы экспоненты вещественных чисел и регулярные диофантовы экспоненты решёток}

Покажем, как связаны диофантовы экспоненты решёток с диофантовыми экспонентами вещественных чисел.

Для любой решётки $\La$ полного ранга в соответствии с определением \ref{def:regular_lattice_exponents} экспонента $\omega(\La)$ равна супремуму таких вещественных $\gamma$, что неравенство
\[
  \Pi(\vec z)\leq|\vec z|^{-\gamma}
\]
имеет бесконечно много решений в $\vec z\in\La$.

Пусть $\theta$, $\eta$ --- различные вещественные числа, $\theta>1$, $\eta>1$. Рассмотрим линейные формы $L_1$, $L_2$ от двух переменных с коэффициентами, записанными в строчках матрицы
\[
  A=
  \begin{pmatrix}
    \theta & -1 \\
    1 & \phantom{-}\eta
  \end{pmatrix},
\]
и положим
\[
  \La=A\Z^2=
  \Big\{\big(L_1(\vec u),L_2(\vec u)\big)\,\Big|\,\vec u\in\Z^2 \Big\}.
\]
Тогда для каждого $\,\vec z=\big(L_1(\vec u),L_2(\vec u)\big)\in\La$, где $\ \vec u=(x,y)\in\Z^2$, справедливо
\[
  \Pi(\vec z)^2=|L_1(\vec u)|\cdot|L_2(\vec u)|=|\theta x-y|\cdot|x+\eta y|,
\]
а также
\[
  |\vec z|\asymp|L_i(\vec u)|\asymp|\vec u|\asymp|x|\asymp|y|
  \ \text{ при }\ |L_j(\vec u)|\leq1,\ i\neq j,
\]
где константы, подразумеваемые символом ``$\asymp$'', зависят лишь от $\theta$ и $\eta$. Следовательно, при $|x+\eta y|\leq1$ справедливо
\[
  \big(\Pi(\vec z)\cdot|\vec z|^\gamma\big)^2
  \asymp
  |\theta x-y|\cdot|x|^{1+2\gamma},
\]
тогда как при $|\theta x-y|\leq1$ справедливо
\[
  \big(\Pi(\vec z)\cdot|\vec z|^\gamma\big)^2
  \asymp
  |x+\eta y|\cdot|y|^{1+2\gamma}.
\]
Отсюда видим, что 
\begin{equation}\label{eq:omega_La_vs_the_maximum}
  \omega(\La)=\frac12\Big(\max\big(\omega(\theta),\omega(\eta)\big)-1\Big).
\end{equation}

При $\omega(\eta)=\omega(\theta)$ пропадает необходимость брать максимум. Особенно удобным для некоторых приложений является выбор $\eta=\theta$. Тогда для решётки
\begin{equation}\label{eq:La_theta}
  \La=
  \begin{pmatrix}
    \theta & -1 \\
    1 & \phantom{-}\theta
  \end{pmatrix}
  \Z^2
\end{equation}
соотношение \eqref{eq:omega_La_vs_the_maximum} сводится к равенству
\[
  \omega(\La)=
  \frac{\omega(\theta)-1}2\,,
\]
из которого очевидным образом следует, что спектр значений экспоненты $\omega(\La)$ совпадает с отрезком $[0,\infty]$.

Однако для описания спектра значений $\hat{\hat\omega}(\La)$ решёток вида \eqref{eq:La_theta} недостаточно. Как увидим ниже, полезно рассматривать пары различных чисел $\theta$, $\eta$, диофантовы свойства которых связаны определённым образом друг с другом.

\section{Относительные и гиперболические минимумы}\label{sec:relative_and_hyperbolic_minima}

\begin{definition}
  Пусть $\La$ --- решётка полного ранга в $\R^d$. Точка $\vec z=(z_1,\ldots,z_d)\in\La$ называется \emph{относительным минимумом} решётки $\La$, если не существует таких ненулевых точек $\vec w=(w_1,\ldots,w_d)\in\La$, что
  \[
    |w_i|\leq|z_i|,\quad i=1,\ldots,d,
    \qquad\text{ и }\qquad
    \sum_{i=1}^d|w_i|<\sum_{i=1}^d|z_i|.
  \] 
\end{definition}

Иными словами, точка $\vec z\in\La$ является относительным минимумом решётки $\La$, если в параллелепипеде $\cP(\vec z)$, помимо вершин и точки начала координат, точек решётки нет.

\begin{definition}\label{def:hyperbolic_minimum}
  Пусть $\La$ --- решётка полного ранга в $\R^d$. Будем называть точку $\vec z\in\La$ \emph{гиперболическим минимумом} решётки $\La$, если не существует таких ненулевых точек $\vec w\in\La$, что 
  \[
    |\vec w|\leq|\vec z|
    \qquad\text{ и }\qquad
    \Pi(\vec w)<\Pi(\vec z).
  \] 
\end{definition}

Положим
\[
  \cH(\vec z)=\Big\{\vec w\in\R^d \,\Big|\, |\vec w|\leq|\vec z|,\ \Pi(\vec w)<\Pi(\vec z) \Big\}.
\]
Из определения \ref{def:hyperbolic_minimum} очевидно, что ненулевая точка $\vec z$ решётки $\La$ является \emph{гиперболическим минимумом} этой решётки тогда и только тогда, когда $\cH(\vec z)\cap\La=\{\vec 0\}$.

\begin{lemma}\label{l:hyperbolic_is_also_relative}
  Каждый гиперболический минимум решётки $\La$ является и относительным минимумом этой решётки.
\end{lemma}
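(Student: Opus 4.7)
Проведём рассуждение от противного. Предположим, что $\vec z\in\La$ --- гиперболический минимум, но не относительный минимум. По определению относительного минимума тогда существует ненулевая точка $\vec w\in\La$ с
\[
  |w_i|\leq|z_i|,\ i=1,\ldots,d,
  \qquad
  \sum_{i=1}^d|w_i|<\sum_{i=1}^d|z_i|.
\]
Цель --- установить, что эта же точка $\vec w$ лежит в $\cH(\vec z)$, т.е.\ одновременно $|\vec w|\leq|\vec z|$ и $\Pi(\vec w)<\Pi(\vec z)$. Это даст противоречие с гиперболической минимальностью $\vec z$ и тем самым завершит доказательство.

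Первое из нужных неравенств выполнено тривиально: $|\vec w|=\max_i|w_i|\leq\max_i|z_i|=|\vec z|$. Чтобы получить $\Pi(\vec w)<\Pi(\vec z)$, планируется воспользоваться следующим простым наблюдением: из нестрогих покоординатных неравенств и строгого неравенства сумм следует существование индекса $j$, для которого $|w_j|<|z_j|$ строго (иначе суммы были бы равны). В содержательном случае $\Pi(\vec z)>0$ отсюда сразу вытекает
\[
  \prod_{i=1}^d|w_i|\;\leq\;|w_j|\prod_{i\neq j}|z_i|\;<\;\prod_{i=1}^d|z_i|,
\]
что и даёт $\Pi(\vec w)<\Pi(\vec z)$.

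Главное (и, по сути, единственное) содержательное место --- именно переход от одного строгого покоординатного неравенства к строгому неравенству произведений, и он опирается на условие $\Pi(\vec z)>0$. В вырожденной ситуации $\Pi(\vec z)=0$ (когда хотя бы одна координата $\vec z$ нулевая) множество $\cH(\vec z)$ пусто, определение гиперболического минимума выполняется вакуумно, и такие точки естественно исключать из содержательного рассмотрения; все остальные шаги --- чисто формальны.
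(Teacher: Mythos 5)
Ваше рассуждение корректно и по существу совпадает с доказательством в статье, только изложено в контрапозитивной форме: ключевой шаг в обоих случаях один и тот же --- из покоординатных нестрогих неравенств и одного строгого следует строгое неравенство произведений, так что точка, нарушающая относительную минимальность, попадает в $\cH(\vec z)$. Ваша явная оговорка про вырожденный случай $\Pi(\vec z)=0$ уместна (в статье он молчаливо опущен, поскольку для рассматриваемых там решёток $\Pi$ не обращается в ноль на ненулевых точках).
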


\begin{proof}
  Если $\vec z$ --- гиперболический минимум решётки $\La$, ненулевые точки этой решётки, содержащиеся в $\cH(\vec z)$, обязаны располагаться на пересечении границы $\cH(\vec z)$ с поверхностью $\big\{ \vec w\in\R^d \,\big|\, \Pi(\vec w)=\Pi(\vec z) \big\}$. В частности, в этом случае любая ненулевая точка решётки $\La$, содержащаяся в $\cP(\vec z)$, обязана быть вершиной $\cP(\vec z)$. Таким образом, каждый гиперболический минимум решётки $\La$, действительно, является и относительным минимумом этой решётки.
\end{proof}

\subsection{Последовательные гиперболические минимумы}

Если $\vec z$ --- гиперболический минимум решётки $\La$, то, конечно же, точка $-\vec z$ также является гиперболическим минимумом этой решётки. Может так оказаться, что существуют другие гиперболические минимумы с такими же значениями функционалов $|\cdot|$ и $\Pi(\,\cdot\,)$. Выберем из каждого такого набора по одному представителю и упорядочим их по возрастанию $|\cdot|$. Получим последовательность $\vec z_1,\vec z_2,\vec z_3,\ldots$ гиперболических минимумов (см. рис. \ref{fig:successive_hyperbolic_minima}), причём $|\vec z_k|<|\vec z_{k+1}|$ и $\hat\cH(\vec z_k)\cap\La=\{\vec 0\}$, где
\[
  \hat\cH(\vec z_k)=\Big\{\vec w\in\R^d \,\Big|\, |\vec w|<|\vec z_{k+1}|,\ \Pi(\vec w)<\Pi(\vec z_k) \Big\}.
\]

\begin{figure}[h]
\centering
\begin{tikzpicture}[domain=-5.1:5.1,scale=1.3]

  \draw (-5.3,0) -- (5.3,0); 
  \draw (0,-5.3) -- (0,5.3); 

  \fill[fill=blue,opacity=0.15]
      plot [domain=0.2:5] (\x,{1/(\x)}) --
      plot [domain=5:0.2] (\x, {-1/(\x)}) --
      plot [domain=-0.2:-5] (\x,{1/(\x)}) --
      plot [domain=-5:-0.2] (\x, {-1/(\x)}) --
      cycle;

  \fill[fill=red,opacity=0.15]
      plot [domain=0.75:2] (\x,{1.5/(\x)}) --
      plot [domain=2:0.75] (\x, {-1.5/(\x)}) --
      plot [domain=-0.75:-2] (\x,{1.5/(\x)}) --
      plot [domain=-2:-0.75] (\x, {-1.5/(\x)}) --
      cycle;

  \draw[color=blue]
      plot [domain=0.2:5] (\x,{1/(\x)}) --
      plot [domain=5:0.2] (\x, {-1/(\x)}) --
      plot [domain=-0.2:-5] (\x,{1/(\x)}) --
      plot [domain=-5:-0.2] (\x, {-1/(\x)}) --
      cycle;

  \draw[color=red]
      plot [domain=0.75:2] (\x,{1.5/(\x)}) --
      plot [domain=2:0.75] (\x, {-1.5/(\x)}) --
      plot [domain=-0.75:-2] (\x,{1.5/(\x)}) --
      plot [domain=-2:-0.75] (\x, {-1.5/(\x)}) --
      cycle;

  \node[fill=black,circle,inner sep=1.3pt] at (0,0) {};
  \node[fill=black,circle,inner sep=1.3pt] at (1.03,1.5/1.03) {};
  \node[fill=black,circle,inner sep=1.3pt] at (-1.03,-1.5/1.03) {};
  \node[fill=black,circle,inner sep=1.3pt] at (-0.5,2) {};
  \node[fill=black,circle,inner sep=1.3pt] at (0.5,-2) {};
  \node[fill=black,circle,inner sep=1.3pt] at (5,0.1) {};
  \node[fill=black,circle,inner sep=1.3pt] at (-5,-0.1) {};
  
  \draw(1.1,1.5/1.1+0.1) node[right]{$\vec z_{k-1}$};
  \draw(-1.03,-1.5/1.1-0.1) node[left]{$-\vec z_{k-1}$};
  \draw(0.47,-2.23) node[right]{$\vec z_k$};
  \draw(-0.47,2.23) node[left]{$-\vec z_k$};
  \draw(5,0.2) node[right]{$\vec z_{k+1}$};
  \draw(-4.93,-0.2) node[left]{$-\vec z_{k+1}$};

  \draw(4,0.3) node[above]{${\color[rgb]{0,0,1}\hat\cH(\vec z_k)}$};
  \draw(1.3,-1.35) node[right]{${\color[rgb]{1,0,0}\hat\cH(\vec z_{k-1})}$};

\end{tikzpicture}
\caption{Последовательные гиперболические минимумы}\label{fig:successive_hyperbolic_minima}
\end{figure}

Если функционал $\Pi(\,\cdot\,)$ принимает сколь угодно малые значения на ненулевых точках решётки $\La$, но при этом ни на какой такой точке не обращается в ноль, гиперболических минимумов бесконечно много. В этом случае для экспонент $\omega(\La)$, $\hat{\hat\omega}(\La)$ справедливы равенства
\[
  \omega(\La)=\sup\Big\{ \gamma\in\R \,\Big|\, \forall\,K\in\N\,\ \exists\,k\geq K:\,\Pi(\vec z_k)\leq|\vec z_k|^{-\gamma} \Big\},\ \
\]
\begin{equation}\label{eq:hat_hat_omega_in_terms_of_hyperbolic_minima}
  \hat{\hat\omega}(\La)=\sup\Big\{ \gamma\in\R \,\Big|\, \exists\,K\in\N:\,\forall k\geq K\ \Pi(\vec z_k)\leq|\vec z_{k+1}|^{-\gamma} \Big\}.
\end{equation}

\subsection{Относительные минимумы в двумерном случае}\label{subsec:relative_minima_in_dimension_2}

Пусть далее $d=2$. Тогда, как показано, например, в работе \cite{german_mathnotes_2006}, множество относительных минимумов решётки $\La=A\Z^2$, где
\[
  A=
  \begin{pmatrix}
    \theta & -1 \\
    1 & \phantom{-}\eta
  \end{pmatrix},
  \qquad
  \theta>1,
  \ \
  \eta>1,
\]
совпадает с множеством вершин её четырёх полигонов Клейна
\[
  \cK_{\pmb\e}(\La)=
  \conv\Big(\Big\{ \vec z=(z_1,z_2)\in\La\backslash\{\vec 0\} \,\Big|\, \e_iz_i\geq0,\ i=1,2 \Big\}\Big),
\]
\[
  \pmb\e=(\e_1,\e_2),
  \quad
  \e_1,\e_2=\pm1.
\]
Вершины же этих полигонов Клейна соответствуют подходящим дробям чисел $\theta$, $\eta$. А именно, их вершины являются образами при действии оператора $A$ точек
\begin{equation}\label{eq:vertices}
  \pm(1,0),\quad
  \pm(0,1),\quad
  \pm(q_k,p_k),\quad
  \pm(r_k,-s_k),\quad
  k=0,1,2,\ldots,
\end{equation}
где $p_k/q_k$ и $r_k/s_k$ --- подходящие дроби чисел $\theta$ и $\eta$ соответственно. Подробное описание данного соответствия можно найти в работе \cite{german_tlyustangelov_MJCNT_2016}, а также в обзоре \cite{german_UMN_2023}.

В частности, ввиду леммы \ref{l:hyperbolic_is_also_relative} 
все гиперболические минимумы решётки $\La$ являются образами при действии оператора $A$ каких-то точек вида \eqref{eq:vertices}.

\section{Доказательство теоремы \ref{t:spectrum_of_the_weak}}\label{sec:the_proof}

Зафиксируем $\beta>1$. Построим числа
\[
  \theta=[a_0;a_1,a_2,\ldots],\qquad
  \eta=[b_0;b_1,b_2,\ldots]
\]
следующим образом. Положим $a_0=b_0=1$, $a_1=2$, $b_1=[2^\beta]+1$ и для каждого целого $k\geq2$ положим
\begin{equation}\label{eq:next_partial_qoutients_definition}
  a_k=\bigg[\frac{s_{k-1}^\beta-q_{k-2}}{q_{k-1}}\bigg]+1,
  \qquad
  b_k=\bigg[\frac{q_k^\beta-s_{k-2}}{s_{k-1}}\bigg]+1,
\end{equation}
где $[\,\cdot\,]$ обозначает целую часть, $q_k$ --- знаменатель дроби $p_k/q_k=[a_0;a_1,a_2,\ldots,a_k]$, $s_k$ --- знаменатель дроби $r_k/s_k=[b_0;b_1,b_2,\ldots,b_k]$.

\paragraph{Оценки роста знаменателей и неполных частных.}

Ввиду рекуррентных соотношений
\[
  q_k=a_kq_{k-1}+q_{k-2},
  \qquad
  s_k=b_ks_{k-1}+s_{k-2}
\]
из \eqref{eq:next_partial_qoutients_definition} следует, что для всех натуральных $k$ справедливы неравенства
\begin{equation}\label{eq:consecutive_denominators_inequalities}
\begin{array}{ll}
  q_k>
  \dfrac{s_{k-1}^\beta-q_{k-2}}{q_{k-1}}\cdot q_{k-1}+q_{k-2}=
  s_{k-1}^\beta,
  &
  s_k>
  \dfrac{q_k^\beta-s_{k-2}}{s_{k-1}}\cdot s_{k-1}+s_{k-2}=
  q_k^\beta,
  \\ \vphantom{1^{\Big|}}
  q_k<
  s_{k-1}^\beta+q_{k-1}<
  s_{k-1}^\beta+s_{k-1}^{1/\beta}<
  2s_{k-1}^\beta,
  &
  s_k<
  q_k^\beta+s_{k-1}<
  q_k^\beta+q_k^{1/\beta}<
  2q_k^\beta,
\end{array}
\end{equation}
а для всех достаточно больших $k$
\begin{equation}\label{eq:partial_quotients_vs_denominators}
\begin{array}{l}
  a_k>
  \dfrac{s_{k-1}^\beta-q_{k-2}}{q_{k-1}}>
  \dfrac{q_{k-1}^{\beta^2}-q_{k-1}^{1/\beta^2}}{q_{k-1}}>
  \dfrac12\,q_{k-1}^{\beta^2-1},
  \\ \vphantom{1^{\bigg|}}
  a_k<
  \dfrac{s_{k-1}^\beta-q_{k-2}}{q_{k-1}}+1<
  \dfrac{2^\beta q_{k-1}^{\beta^2}}{q_{k-1}}+1=
  2^\beta q_{k-1}^{\beta^2-1}+1,
  \\ \vphantom{1^{\bigg|}}
  b_k>
  \dfrac{q_k^\beta-s_{k-2}}{s_{k-1}}>
  \dfrac{s_{k-1}^{\beta^2}-s_{k-1}^{1/\beta^2}}{s_{k-1}}>
  \dfrac12\,s_{k-1}^{\beta^2-1},
  \\ \vphantom{1^{\bigg|}}
  b_k<
  \dfrac{q_k^\beta-s_{k-2}}{s_{k-1}}+1<
  \dfrac{2^\beta s_{k-1}^{\beta^2}}{s_{k-1}}+1=
  2^\beta s_{k-1}^{\beta^2-1}+1.
\end{array}
\end{equation}
В частности,
\begin{equation}\label{eq:denominators_asymptotics}
  q_k\asymp s_{k-1}^\beta,
  \qquad
  s_k\asymp q_k^\beta
\end{equation}
и
\begin{equation}\label{eq:partial_quotients_asymptotics}
  \,\quad
  a_{k+1}\asymp
  q_k^{\beta^2-1},
  \qquad
  b_{k+1}\asymp
  s_k^{\beta^2-1}.
\end{equation}

\paragraph{Неравенства для произведений.}

Как известно (см., например, \cite{lang,khintchine_CF,schmidt_DA}),
\[
  \frac1{a_{k+1}+2}<
  q_k|q_k\theta-p_k|<
  \frac1{a_{k+1}}\,,
  \qquad
  \frac1{b_{k+1}+2}<
  s_k|s_k\eta-r_k|<
  \frac1{b_{k+1}}\,.
  \qquad
\]
Учитывая \eqref{eq:partial_quotients_vs_denominators}, получаем, что для всех достаточно больших $k$
\begin{equation}\label{eq:bounds_for_the_products}
  \frac1{2^\beta q_k^{\beta^2-1}+3}<
  q_k|q_k\theta-p_k|<
  \frac2{q_k^{\beta^2-1}}\,,
  \qquad
  \frac1{2^\beta s_k^{\beta^2-1}+3}<
  s_k|s_k\eta-r_k|<
  \frac2{s_k^{\beta^2-1}}\,.
\end{equation}
Из \eqref{eq:consecutive_denominators_inequalities} и \eqref{eq:bounds_for_the_products} следует, что при фиксированном $\beta$ для всех достаточно больших $k$ справедливо
\[
  q_k|q_k\theta-p_k|>
  \frac1{2^\beta q_k^{\beta^2-1}+3}>
  \frac6{q_k^{\beta(\beta^2-1)}}>
  \frac6{s_k^{\beta^2-1}}>
  3s_k|s_k\eta-r_k|,
  \qquad\,\
\]
\[
  s_k|s_k\eta-r_k|>
  \frac1{2^\beta s_k^{\beta^2-1}+3}>
  \frac6{s_k^{\beta(\beta^2-1)}}>
  \frac6{q_{k+1}^{\beta^2-1}}>
  3q_{k+1}|q_{k+1}\theta-p_{k+1}|.
\]
Поскольку $1<\eta<\theta<2$ и $1\leq p_k/q_k\leq2$, $1\leq r_k/s_k\leq2$ для каждого $k$, справедливы неравенства
\begin{equation}\label{eq:bounds_for_the_plus_forms}
  2q_k<q_k+p_k\eta<5q_k,
  \qquad
  2s_k<r_k\theta+s_k<5s_k.
\end{equation}
Стало быть, для всех достаточно больших $k$ выполняется
\begin{multline}\label{eq:product_decaying_qs}
  |q_k\theta-p_k|\cdot|q_k+p_k\eta|>
  2q_k|q_k\theta-p_k|
  > \\ >
  6s_k|s_k\eta-r_k|>
  \frac65\,
  |r_k\theta+s_k|\cdot|r_k-s_k\eta|,
\end{multline}
\vskip-6mm
\begin{multline}\label{eq:product_decaying_sq}
  |r_k\theta+s_k|\cdot|r_k-s_k\eta|>
  2s_k|s_k\eta-r_k|
  > \\ >
  6q_{k+1}|q_{k+1}\theta-p_{k+1}|>
  \frac65\,
  |q_{k+1}\theta-p_{k+1}|\cdot|q_{k+1}+p_{k+1}\eta|.
\end{multline}

\paragraph{Решётка $\La$ и её относительные минимумы.}

Рассмотрим решётку $\La=A\Z^2$, где
\[
  A=
  \begin{pmatrix}
    \theta & -1 \\
    1 & \phantom{-}\eta
  \end{pmatrix}.
\]
Положим $\vec v_{-1}=(\theta,1)$, $\vec w_{-1}=(1,-\eta)$ и для каждого $k\geq0$ положим 
\[
  \vec v_k=(q_k\theta-p_k,q_k+p_k\eta)=A
  \begin{pmatrix}
    \,q_k \\
    \,p_k
  \end{pmatrix},
  \qquad
  \vec w_k=(r_k\theta+s_k,r_k-s_k\eta)=A
  \begin{pmatrix}
    \phantom{-}r_k \\
    -s_k
  \end{pmatrix}.
\]
Как сказано в пункте \ref{subsec:relative_minima_in_dimension_2}, множество относительных минимумов решётки $\La$ в точности совпадает с множеством точек
\begin{equation}\label{eq:relative_minima_of_La}
  \pm\vec v_k,\quad
  \pm\vec w_k,\qquad
  k\in\Z,\ \ k\geq-1.
\end{equation}

\paragraph{Выбор точки $\vec v_m$.}

В силу \eqref{eq:bounds_for_the_plus_forms} и \eqref{eq:consecutive_denominators_inequalities} найдётся такое $K\in\N$, что для всех $k>K$ выполняются неравенства
\begin{equation}\label{eq:supnorm_growing}
\begin{array}{l}
  |\vec v_k|<5q_k<5s_k^{1/\beta}<
  5\cdot2^{-1/\beta}|\vec w_k|^{1/\beta}<
  |\vec w_k|,\quad\,\
  \\
  \vphantom{1^{\Big|}}
  |\vec w_k|<5s_k<5q_{k+1}^{1/\beta}<
  5\cdot2^{-1/\beta}|\vec v_{k+1}|^{1/\beta}<
  |\vec v_{k+1}|,
\end{array}
\end{equation}
причём, к тому же, ввиду \eqref{eq:product_decaying_qs}, \eqref{eq:product_decaying_sq}
\begin{equation}\label{eq:Pi_decaying}
  \Pi(\vec v_k)>\frac65\,\Pi(\vec w_k),
  \qquad
  \Pi(\vec w_k)>\frac65\,\Pi(\vec v_{k+1}).
\end{equation}
Из \eqref{eq:Pi_decaying} следует, что найдётся такой индекс $m>K$, для которого
\[
  \Pi(\vec v_m)<\min_{k\leq m-1}\min\big(\Pi(\vec v_k),\Pi(\vec w_k)\big).
\]
Учитывая, что множество точек \eqref{eq:relative_minima_of_La} совпадает с множеством вершин полигонов Клейна решётки $\La$, получаем, что $\vec v_m$ является гиперболическим минимумом этой решётки.

\paragraph{Последующие гиперболические минимумы.}

Покажем, что $\vec w_m$ также является гиперболическим минимумом решётки $\La$. Рассмотрим прямоугольники
\[
\begin{array}{ll}
  \cP_1=\Big\{(z_1,z_2)\in\R^2 \,\Big|\, |z_1|\leq|q_m\theta-p_m|,\ |z_2|\leq q_{m+1}+p_{m+1}\eta \Big\},
  \\ \vphantom{\Bigg|}
  \cP_2=\Big\{(z_1,z_2)\in\R^2 \,\Big|\, |z_1|\leq|q_m\theta-p_m|,\ |z_2|\leq r_m\theta+s_m \Big\},
  \\
  \cP_3=\Big\{(z_1,z_2)\in\R^2 \,\Big|\, |z_1|\leq r_m\theta+s_m,\ |z_2|\leq|q_m\theta-p_m| \Big\}
\end{array}
\]
и параллелограммы
\[
\begin{array}{ll}
  \cQ_1=\Big\{(x,y)\in\R^2 \,\Big|\, |x\theta-y|\leq|q_m\theta-p_m|,\ |x|\leq q_{m+1} \Big\},
  \\ \vphantom{\Bigg|}
  \cQ_2=\Big\{(x,y)\in\R^2 \,\Big|\, |y|\leq s_m,\ |x+y\eta|\leq|q_m\theta-p_m| \Big\},
  \\
  \cQ_3=\Big\{(x,y)\in\R^2 \,\Big|\, |y|\leq s_m,\ |x+y\eta|\leq|s_{m-1}\eta-r_{m-1}| \Big\}.
  \quad\ \
\end{array}
\]

\begin{figure}[h]
\centering
\begin{tikzpicture}[domain=-5.1:5.1,scale=1.3]

  \draw[->,>=stealth'] (-5.5,0) -- (5.5,0) node[right] {$z_2$};
  \draw[->,>=stealth'] (0,-5.5) -- (0,5.5) node[above] {$z_1$};

  \fill[fill=blue,opacity=0.15]
      plot [domain=0.2:5] (\x,{1/(\x)}) --
      plot [domain=5:0.2] (\x, {-1/(\x)}) --
      plot [domain=-0.2:-5] (\x,{1/(\x)}) --
      plot [domain=-5:-0.2] (\x, {-1/(\x)}) --
      cycle;

  \fill[fill=red,opacity=0.15]
      plot [domain=0.75:2] (\x,{1.5/(\x)}) --
      plot [domain=2:0.75] (\x, {-1.5/(\x)}) --
      plot [domain=-0.75:-2] (\x,{1.5/(\x)}) --
      plot [domain=-2:-0.75] (\x, {-1.5/(\x)}) --
      cycle;

  \fill[fill=teal,opacity=0.1]
      (-5,0.75) -- (5,0.75) -- (5,-0.75) -- (-5,-0.75) -- cycle;

  \fill[fill=teal,opacity=0.1]
      (0.75,-5) -- (0.75,5) -- (-0.75,5) -- (-0.75,-5) -- cycle;

  \draw[color=blue]
      plot [domain=0.2:5] (\x,{1/(\x)}) --
      plot [domain=5:0.2] (\x, {-1/(\x)}) --
      plot [domain=-0.2:-5] (\x,{1/(\x)}) --
      plot [domain=-5:-0.2] (\x, {-1/(\x)}) --
      cycle;

  \draw[color=red]
      plot [domain=0.75:2] (\x,{1.5/(\x)}) --
      plot [domain=2:0.75] (\x, {-1.5/(\x)}) --
      plot [domain=-0.75:-2] (\x,{1.5/(\x)}) --
      plot [domain=-2:-0.75] (\x, {-1.5/(\x)}) --
      cycle;
  
  \draw[color=teal] (-5,0.75) -- (5,0.75) -- (5,-0.75) -- (-5,-0.75) -- cycle;

  \draw[color=teal] (0.75,-5) -- (0.75,5) -- (-0.75,5) -- (-0.75,-5) -- cycle;

  \node[fill=black,circle,inner sep=1.3pt] at (0,0) {};
  \node[fill=black,circle,inner sep=1.3pt] at (2,0.75) {};
  \node[fill=black,circle,inner sep=1.3pt] at (-2,-0.75) {};
  \node[fill=black,circle,inner sep=1.3pt] at (-1/5,5) {};
  \node[fill=black,circle,inner sep=1.3pt] at (1/5,-5) {};
  
  \draw(1.95,0.9) node[right]{$\vec v_m$};
  \draw(-1.9,-0.9) node[left]{$-\vec v_m$};
  \draw(-0.18,5.15) node[left]{$\vec w_m$};
  \draw(0.02,-5.17) node[right]{$-\vec w_m$};

  \draw(4,0.22) node[above,color=blue]{$\cH(\vec w_m)$};
  \draw(1.27,-1.35) node[right,color=red]{$\cH(\vec v_m)$};
  \draw(-4,0.75) node[above,color=teal]{$\cP_2$};
  \draw(0.75,4) node[right,color=teal]{$\cP_3$};

\end{tikzpicture}
\caption{Покрытие $\cH(\vec w_n)$}\label{fig:H_w_m_covering}
\end{figure}

Поскольку $p_m/q_m$ и $p_{m+1}/q_{m+1}$ --- последовательные подходящие дроби числа $\theta$,
\begin{equation}\label{eq:points_in_Q1}
  \cQ_1\cap\Z^2=\big\{\vec 0,\pm(p_m,q_m),\pm(p_{m+1},q_{m+1})\big\}.
\end{equation}
Из неравенств $1<\eta<\theta<2$ легко вывести, что для любого $\vec z\in\R^2$ справедливо
\begin{equation}\label{eq:bounds_for_A}
  |\vec z|_2<|A(\vec z)|_2<5|\vec z|_2,
\end{equation}
где $|\cdot|_2$ обозначает евклидову норму. Стало быть, при достаточно большом $K$ имеет место включение
\[
  \cP_1\subset A(\cQ_1)\cup(\vec v_{m+1}+\cB)\cup(-\vec v_{m+1}+\cB),
\]
где $\cB$ обозначает открытый круг радиуса $1$ с центром в начале координат. В этом круге ненулевых точек решётки $\La$ нет. Следовательно, в силу \eqref{eq:points_in_Q1} имеем
\[
  \cP_1\cap\La=\big\{\vec 0,\pm\vec v_m,\pm\vec v_{m+1}\big\}.
\]
Из \eqref{eq:supnorm_growing} следует, что $q_m+p_m\eta < r_m\theta+s_m < q_{m+1}+p_{m+1}\eta$, то есть $\cP_2\subset\cP_1$ и
\begin{equation}\label{eq:points_in_P2}
  \cP_2\cap\Z^2=\big\{\vec 0,\pm\vec v_m\big\}.
\end{equation}

Далее, поскольку $r_{m-1}/s_{m-1}$ и $r_m/s_m$ --- последовательные подходящие дроби числа $\eta$,
\[
  \cQ_3\cap\Z^2=\big\{\vec 0,\pm(r_{m-1},-s_{m-1}),\pm(r_m,-s_m)\big\}.
\]
Из \eqref{eq:product_decaying_qs}, \eqref{eq:product_decaying_sq} и \eqref{eq:supnorm_growing} следует, что $|s_m\eta-r_m|<|q_m\theta-p_m|<|s_{m-1}\eta-r_{m-1}|$, то есть $\cQ_2\subset\cQ_3$ и
\begin{equation}\label{eq:points_in_Q2}
  \cQ_2\cap\Z^2=\big\{\vec 0,\pm(r_m,-s_m)\big\}.
\end{equation}
Поскольку для каждого $\vec z\in\R^2$ справедливо \eqref{eq:bounds_for_A}, при достаточно большом $K$ имеет место включение
\[
  \cP_3\subset A(\cQ_2)\cup(\vec w_m+\cB)\cup(-\vec w_m+\cB).
\]
В круге $\cB$ ненулевых точек решётки $\La$ нет, стало быть, в силу \eqref{eq:points_in_Q2} имеем
\begin{equation}\label{eq:points_in_P3}
  \cP_3\cap\La=\big\{\vec 0,\pm\vec w_m\big\}.
\end{equation}

Наконец, поскольку $\Pi(\vec w_m)<\Pi(\vec v_m)$, справедливо включение
\[
  \cH(\vec w_m)
  \subset
  \cH(\vec v_m)\cup\cP_2\cup\cP_3,
\]
откуда, учитывая \eqref{eq:points_in_P2}, \eqref{eq:points_in_P3} и тот факт, что $\vec v_m$ является гиперболическим минимумом решётки $\La$, получаем, что и $\vec w_m$ является гиперболическим минимумом этой решётки.

Аналогичным образом доказывается, что и все последующие точки $\vec v_k$, $\vec w_k$ при $k\geq m+1$ являются гиперболическими минимумами решётки $\La$.

\paragraph{Значение экспоненты $\hat{\hat\omega}(\Lambda)$.}

Ввиду \eqref{eq:bounds_for_the_products} и \eqref{eq:denominators_asymptotics} справедливы асимптотические оценки
\begin{multline*}
  \Pi(\vec v_k)=
  \big(|q_k\theta-p_k|\cdot|q_k+p_k\eta|\big)^{1/2}
  \asymp
  \big(q_k|q_k\theta-p_k|\big)^{1/2}
  \asymp \\ \asymp
  q_k^{(1-\beta^2)/2}
  \asymp
  s_k^{(\beta^{-1}-\beta)/2}
  \asymp
  (r_k\theta+s_k)^{(\beta^{-1}-\beta)/2}=
  |\vec w_k|^{(\beta^{-1}-\beta)/2},
\end{multline*}
\vskip-8mm
\begin{multline*}
  \Pi(\vec w_k)=
  \big(|r_k\theta+s_k|\cdot|r_k-s_k\eta|\big)^{1/2}
  \asymp
  \big(s_k|s_k\eta-r_k|\big)^{1/2}
  \asymp \\ \asymp
  s_k^{(1-\beta^2)/2}
  \asymp
  q_{k+1}^{(\beta^{-1}-\beta)/2}
  \asymp
  (q_{k+1}+p_{k+1}\eta)^{(\beta^{-1}-\beta)/2}=
  |\vec v_{k+1}|^{(\beta^{-1}-\beta)/2}.
\end{multline*}
Стало быть, если $k$ достаточно велико и $t_1,t_2$ удовлетворяют неравенствам
\[
  q_k+p_k\eta\leq t_1< 
  r_k\theta+s_k\leq t_2< 
  q_{k+1}+p_{k+1}\eta,
\]
то
\[
\begin{array}{ll}
  |\vec v_k|=
  q_k+p_k\eta
  \leq t_1,
  &\quad
  \Pi(\vec v_k)
  \asymp
  (r_k\theta+s_k)^{(\beta^{-1}-\beta)/2}<
  t_1^{(\beta^{-1}-\beta)/2},
  \\
  |\vec w_k|=
  r_k\theta+s_k
  \leq t_2,
  &\quad
  \Pi(\vec w_k)
  \asymp
  (q_{k+1}+p_{k+1}\eta)^{(\beta^{-1}-\beta)/2}<
  t_2^{(\beta^{-1}-\beta)/2}.
  \vphantom{1^{\Big|}}
\end{array}
\]
Отсюда ввиду \eqref{eq:hat_hat_omega_in_terms_of_hyperbolic_minima} следует, что
\[
  \hat{\hat\omega}(\Lambda)\geq(\beta-\beta^{-1})/2.
\]
Поскольку же $\Pi(\vec v_k)\asymp|\vec w_k|^{(\beta^{-1}-\beta)/2}$ и во внутренности множества
\[
  \Big\{\vec z\in\R^d \,\Big|\, |\vec z|\leq|\vec w_k|,\ \Pi(\vec z)\leq\Pi(\vec v_k) \Big\}
\]
нет ненулевых точек решётки, приходим к равенству
\[
  \hat{\hat\omega}(\Lambda)=(\beta-\beta^{-1})/2.
\]

\paragraph{Описание спектра.}

Образом луча $(1,\infty)$ при отображении
\[
  \beta\mapsto(\beta-\beta^{-1})/2
\]
является луч $(0,\infty)$. Стало быть, все положительные числа принадлежат спектру значений $\hat{\hat\omega}(\Lambda)$.

Далее, если в качестве $\theta$ и $\eta$ взять иррациональные числа с ограниченными неполными частными, то для таких чисел $\omega(\theta)=\omega(\eta)=1$ и в соответствии с \eqref{eq:omega_La_vs_the_maximum} имеем $\omega(\La)=0$. Поскольку же $0\leq\hat{\hat\omega}(\La)\leq\omega(\La)$, получаем $\hat{\hat\omega}(\La)=0$.

Значение $\infty$ достигается, например, на тех решётках, содержащих ненулевые точки, среди координат которых есть ноль. Но можно построить и решётки без таких точек, на которых также достигается значение $\infty$. Например, если вместо роста неполных частных, задаваемого \eqref{eq:next_partial_qoutients_definition}, рассмотреть более быстрый рост --- к примеру, определить неполные частные соотношениями
\[
  a_k=\bigg[\frac{s_{k-1}^k-q_{k-2}}{q_{k-1}}\bigg]+1,
  \qquad
  b_k=\bigg[\frac{q_k^k-s_{k-2}}{s_{k-1}}\bigg]+1,
\]
то, следуя шагам рассуждения, приведённого выше, несложно показать, что соответствующие $\theta$ и $\eta$ дадут решётку $\La$, для которой будет выполнено $\hat{\hat\omega}(\La)=\infty$.

Теорема доказана.

\section*{Благодарности}

Автор 
благодарен профессору Яну Бюжо за ценные замечания о природе экспоненты $\hat{\hat\omega}(\La)$.


\begin{thebibliography}{99}

\bibitem
    {dirichlet} 
    \textsc{J.\,P.\,G.\,Lejeune\,Dirichlet} 
    \textit{Verallgemeinerung eines Satzes aus der Lehre von den Kettenbrüchen nebst einigen Anwendungen auf die Theorie der Zahlen.} 
    Sitzungsber. Preuss. Akad. Wiss. (1842), 93--95.
\bibitem
    {german_UMN_2023}
    \textsc{О.\,Н.\,Герман} 
    \textit{Геометрия диофантовых экспонент.}
    Успехи математических наук, \textbf{78}:2 (470) (2023), 71--148.
\bibitem
    {schmidt_summerer_2009}
    \textsc{W.\,M.\,Schmidt, L.\,Summerer}
    \textit{Parametric geometry of numbers and applications.}
    Acta Arithmetica \textbf{140} (2009), 67--91.
\bibitem
    {schmidt_summerer_2013}
    \textsc{W.\,M.\,Schmidt, L.\,Summerer}
    \textit{Diophantine approximation and parametric geometry of numbers.}
    Monatsh. Math., \textbf{169} (2013), 51--104.
\bibitem
    {roy_annals_2015}
    \textsc{D.\,Roy}
    \textit{On Schmidt and Summerer parametric geometry of numbers.}
    Ann. of Math. \textbf{182} (2015), 739--786.
\bibitem
    {roy_zeitschrift_2016}
    \textsc{D.\,Roy}
    \textit{Spectrum of the exponents of best rational approximation.}
    Math. Z. \textbf{283} (2016), 143--155.
\bibitem
    {german_comm_math_2023}
    \textsc{O.\,N.\,German}
    \textit{On triviality of uniform Diophantine exponents of lattices.}
    Commu\-ni\-ca\-tions in Mathematics,
    \textbf{31}:2 (2023), 27--33.
\bibitem
    {german_lattice_exponents_spectrum}
    \textsc{О.\,Н.\,Герман}
    \textit{Линейные формы заданного диофантового типа и экспоненты решеток.}
    Изв. РАН. Сер. матем., \textbf{84}:1 (2020), 5--26.
\bibitem
    {borevich_shafarevich}
    \textsc{З.\,И.~Боревич, И.\,Р.~Шафаревич}
    \textit{Теория чисел.}
    М., Наука (1964)
\bibitem
    {german_lattice_transference}
    \textsc{О.\,Н.\,Герман}
    \textit{Диофантовы экспоненты решёток.}
    Совр. пробл. матем., \textbf{23} (2016), 35--42.
\bibitem
    {german_mathnotes_2006}
    \textsc{О.\,Н.\,Герман}
    \textit{Полиэдры Клейна и относительные минимумы решёток.}
    Матем. заметки, \textbf{79}:4 (2006), 546--552.
\bibitem
    {german_tlyustangelov_MJCNT_2016}
    \textsc{O.\,N.\,German, I.\,A.\,Tlyustangelov}
    \textit{Palindromes and periodic continued fractions.}
    Moscow J. Comb. Number Theory, \textbf{6}:2--3 (2016), 233--252.
\bibitem
    {lang}
    \textsc{С.\,Ленг}
    \textit{Введение в теорию диофантовых приближений.}
    М.:\,<<Мир>> (1970).
\bibitem
    {khintchine_CF}
    \textsc{А.\,Я.\,Хинчин}
    \textit{Цепные дроби.}
    М.:\,<<Наука>> (1978).
\bibitem
    {schmidt_DA}
    \textsc{В.\,Шмидт}
    \textit{Диофантовы приближения.}
    М.:\,<<Мир>> (1983).

\end{thebibliography}
\end{document}